\numberwithin{equation}{section}
\newtheorem{con}{Conjecture}
\newtheorem{prop}{Proposition}
\newtheorem{thm}{Theorem}
\theoremstyle{definition}
\theoremstyle{remark}
\begin{document}

\title[Sharp Weak Type Estimates for a Family of Zygmund Bases]{Sharp Weak Type Estimates for a family of Zygmund bases}
 \dedicatory{Dedicated to Alexey Solyanik on the occasion of his sixtieth birthday}

%\author{Dmitry Dmitrishin}
%\address{D. D.: Department of Applied Mathematics, Odessa National Polytechnic University, Odessa 65044, Ukraine}
%\email{\href{mailto: dmitrishin@opu.ua}{dmitrishin@opu.ua}}

\author{Paul Hagelstein}
\address{P. H.: Department of Mathematics, Baylor University, Waco, Texas 76798}
\email{\href{mailto:paul_hagelstein@baylor.edu}{paul\_hagelstein@baylor.edu}}
\thanks{P. H. is partially supported by a grant from the Simons Foundation (\#521719 to Paul Hagelstein).}

\author{Alex Stokolos}
\address{A. S.: Department of Mathematical Sciences, Georgia Southern University, Statesboro, Georgia 30460}
\email{\href{mailto:astokolos@GeorgiaSouthern.edu}{astokolos@GeorgiaSouthern.edu}}

\subjclass[2020]{Primary 42B25}
\keywords{maximal functions, differentiation basis}

%%%%%%%%%%%%%%%%%%%%%%%%%%%%%% SECTION SECTION SECTION
\begin{abstract}
Let $\mathcal{B}$ be a collection of rectangular parallelepipeds in $\mathbb{R}^3$ whose sides are parallel to the coordinate axes and such that $\mathcal{B}$ consists of parallelepipeds with side lengths of the form $s, 2^j s, t $, where $s, t > 0$ and $j$ lies in a nonempty subset $S$  of the integers.   In this paper, we prove the following:
\\

If $S$ is a finite set, then the associated geometric maximal operator $M_\mathcal{B}$ satisfies the weak type estimate of the form

$$\left|\left\{x \in \mathbb{R}^3 : M_{\mathcal{B}}f(x) > \alpha\right\}\right| \leq C \int_{\mathbb{R}^3} \frac{|f|}{\alpha}\left(1 + \log^+ \frac{|f|}{\alpha}\right)\;$$
but does not satisfy an estimate of the form

$$\left|\left\{x \in \mathbb{R}^3 : M_{\mathcal{B}}f(x) > \alpha\right\}\right| \leq C \int_{\mathbb{R}^3} \phi\left(\frac{|f|}{\alpha}\right)$$
for any convex increasing function $\phi: \mathbb[0, \infty) \rightarrow [0, \infty)$ satisfying the condition
$$\lim_{x \rightarrow \infty}\frac{\phi(x)}{x (\log(1 + x))} = 0\;.$$

  On the other hand, if $S$ is an infinite set, then the associated geometric maximal operator $M_\mathcal{B}$ satisfies the weak type estimate

$$\left|\left\{x \in \mathbb{R}^3 : M_{\mathcal{B}}f(x) > \alpha\right\}\right| \leq C \int_{\mathbb{R}^3} \frac{|f|}{\alpha} \left(1 + \log^+ \frac{|f|}{\alpha}\right)^{2}$$

\noindent but does not satisfy an estimate of the form

$$\left|\left\{x \in \mathbb{R}^3 : M_{\mathcal{B}}f(x) > \alpha\right\}\right| \leq C \int_{\mathbb{R}^3} \phi\left(\frac{|f|}{\alpha}\right)$$
for any convex increasing function $\phi: \mathbb[0, \infty) \rightarrow [0, \infty)$ satisfying the condition
$$\lim_{x \rightarrow \infty}\frac{\phi(x)}{x (\log(1 + x))^2} = 0\;.$$

\end{abstract}

\maketitle
 
\section{Introduction}

\emph{Weak type estimates} play a fundamental role in harmonic analysis.   A prevalent example of a weak type estimate is one satisfied by the Hardy-Littlewood maximal operator $M_{HL}$.  Recall that this operator is defined on $L^1(\mathbb{R}^n)$ by
$$M_{HL}f(x) = \sup_{x \in B}\frac{1}{|B|}\int_B |f|\;,$$
where the supremum is over balls $B$ in $\mathbb{R}^n$ containing $x$.   $M_{HL}$ satisfies the weak type $(1,1)$ inequality
$$\left|\left\{x \in \mathbb{R}^n : M_{HL}f(x) > \alpha\right\}\right| \leq C_n \frac{1}{\alpha} \int_{\mathbb{R}^n}|f|\;.$$

The \emph{strong maximal operator} $M_{str}$ is defined on $L^1(\mathbb{R}^n)$ by
$$M_{str} f(x) = \sup_{x \in R}\frac{1}{|R|}\int_R |f|\;,$$
where the supremum is over all rectangular parallelepipeds in $\mathbb{R}^n$ whose sides are parallel to the coordinate axes.   $M_{str}$ does not enjoy a weak type $(1,1)$ inequality; instead it satisfies the weak type $(L (\log L)^{n-1}, L^1)$  estimate
\begin{equation}\label{e000}\left|\left\{x \in \mathbb{R}^n : M_{str} f(x) > \alpha\right\}\right| \leq C_n \int_{\mathbb{R}^n}\frac{|f|}{\alpha}\left(1 + \log^{+}\frac{|f|}{\alpha}\right)^{n-1}\;.\end{equation}
Proofs of these estimates may be found in, e.g., \cite{guzman}.

Given a collection of sets $\mathcal{B}$ in $\mathbb{R}^n$, we may define the associated maximal operator $M_\mathcal{B}$ by
$$M_\mathcal{B}f(x) = \sup_{x \in R \in \mathcal{B}} \frac{1}{|R|}\int_R |f|\;.$$   If $\mathcal{B}$ is a proper subset of the collection of rectangular parallelepipeds, we refer to $\mathcal{B}$ as a \emph{rare basis} of parallelepipeds.   We would expect smaller collections $\mathcal{B}$ to be associated to better optimal weak type estimates for $M_\mathcal{B}$.  Indeed, if $\mathcal{B}$ were the collection of $n$-dimensional cubes in $\mathbb{R}^n$ we would have that $M_\mathcal{B}$ behaves like the Hardy-Littlewood maximal operator and satisfies a weak type $(1,1)$ inequality; if $M_\mathcal{B}$ were the collection of all rectangular parallelepipeds in $\mathbb{R}^n$ with sides parallel to the axes we would have that $M_\mathcal{B}$ is the strong maximal operator $M_{str}$ and satisfies the $(L (\log L)^{n-1}, L^1)$ estimate indicated above.   It is the case where $\mathcal{B}$ is an intermediate collection that most interests us here.

The most satisfying result to date along these lines is due to Stokolos, who in \cite{stokolos1988} (see also \cite{stokolos2005, stokolos2006}) proved the following.

\begin{prop}\label{prop2}
Let $\mathcal{B}$ be a translation invariant basis of rectangles in $\mathbb{R}^2$ whose sides are parallel to the coordinate axes.  If $\mathcal{B}$ does not satisfy the weak type $(1,1)$ estimate
$$|\{x \in \mathbb{R}^2 : M_\mathcal{B} f(x) > \alpha\}| \leq C \int_{\mathbb{R}^2} \frac{|f|}{\alpha}$$
then $M_\mathcal{B}$ satisfies the weak type estimate
$$\left|\left\{x \in \mathbb{R}^2 : M_\mathcal{B} f(x) > \alpha\right\}\right| \leq C \int_{\mathbb{R}^2} \frac{|f|}{\alpha} \left(1 + \log^+  \frac{|f|}{\alpha}\right)\;$$
but does not satisfy a weak type estimate of the form 
$$|\{x \in \mathbb{R}^2 : M_\mathcal{B} f(x) > \alpha\}| \leq C \int_{\mathbb{R}^2} \phi\left(\frac{|f|}{\alpha}\right)$$
for any nonnegative convex increasing function $\phi$ such that $\phi(x) = o(x \log x)$ as x tends to infinity.
\end{prop}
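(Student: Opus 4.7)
My plan has two halves. The \emph{upper bound} is essentially free: $\mathcal{B}$ is contained in the axis-parallel rectangles of $\mathbb{R}^{2}$, so $M_{\mathcal{B}} f \leq M_{str} f$ pointwise, and the Jessen--Marcinkiewicz--Zygmund estimate~\eqref{e000} with $n=2$ yields the desired $L\log L$ bound directly. All the content of the proposition therefore lies in the \emph{sharpness} statement.

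For sharpness, the first step is to extract geometric information from the failure of the weak $(1,1)$ estimate. The contrapositive is clean: if every rectangle in $\mathcal{B}$ had eccentricity bounded by some constant $E$, then translation invariance would give $M_{\mathcal{B}} f \leq C_{E}\, M_{HL} f$ pointwise, forcing weak $(1,1)$ and contradicting the hypothesis. Consequently $\mathcal{B}$ contains rectangles of arbitrarily large eccentricity, and, after swapping coordinates if necessary and passing to a subsequence, one can select $R_{k}\in\mathcal{B}$ with side lengths $a_{k}\times b_{k}$ satisfying $a_{k}/b_{k}\to\infty$.

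The second and main step, which is where I expect the real work to lie, is a Perron-tree construction. Given a convex increasing $\phi$ with $\phi(x)=o(x\log x)$, the aim is to build test functions $f_{N}$ and levels $\alpha_{N}>0$ with
\[
\frac{\bigl|\{M_{\mathcal{B}} f_{N} > \alpha_{N}\}\bigr|}{\int_{\mathbb{R}^{2}} \phi(|f_{N}|/\alpha_{N})} \longrightarrow \infty.
\]
Following the strategy of C\'ordoba--Fefferman, refined by Stokolos for rare bases, I would thin the $R_{k}$ to a very sparse subsequence whose eccentricities grow rapidly, and arrange $N$ translates of these rectangles into a Perron-tree-like configuration $E_{N}$, taking $f_{N}=\chi_{E_{N}}$. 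The subtlety is that $\mathcal{B}$ is only translation invariant, not dilation invariant, so the available side lengths are dictated by $\mathcal{B}$; the sparsity of the subsequence has to be calibrated so that successive levels of the tree are geometrically compatible and overlaps stay negligible. A properly executed tree should yield $|E_{N}|\lesssim 1$ while forcing $|\{M_{\mathcal{B}}\chi_{E_{N}} > c/N\}|\gtrsim N\log N$, mimicking the sharp weak-type behaviour of $M_{str}$ on a single unit square.

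With such $f_{N}$ in hand, any hypothetical bound $|\{M_{\mathcal{B}} f_{N}>\alpha_{N}\}| \leq C\int \phi(|f_{N}|/\alpha_{N})$ with $\alpha_{N}=c/N$ simplifies to $N\log N \lesssim \phi(N/c)$, which is incompatible with $\phi(x)=o(x\log x)$ once $N$ is large enough. This contradiction completes the sharpness half and hence the proof.
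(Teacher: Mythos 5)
The paper does not itself prove Proposition~\ref{prop2}; it is quoted from Stokolos \cite{stokolos1988}, so there is no in-paper argument to compare your attempt against. Evaluated on its own terms, your skeleton is right and the two easy pieces are executed correctly: $M_\mathcal{B}\le M_{str}$ pointwise gives the $L\log L$ upper bound from (\ref{e000}) with $n=2$, and your contrapositive (bounded eccentricity plus translation invariance forces $M_\mathcal{B}\lesssim_E M_{HL}$ and hence weak $(1,1)$) correctly extracts, after fixing a direction of elongation and passing to a subsequence, rectangles $R_k\in\mathcal{B}$ with eccentricity $a_k/b_k\to\infty$.

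The genuine gap is the construction in the main step, and the ``Perron tree'' label is a symptom of it. A Perron tree is a Kakeya--Besicovitch device for \emph{rotated} needles; it has no role for an axis-parallel rare basis, where there is a single direction of elongation and the obstruction is disparate \emph{eccentricities}, not disparate directions. What is actually required is Stokolos' \emph{crystallization}, which is exactly the device this paper implements in the $3$D setting for Theorem~\ref{t1}: cross a thin strip with a Rademacher-type Cantor set $C_{j_1,\ldots,j_N}=\{t:\sum_{k} r_0(2^{j_k}t)=N\}$ of measure $2^{-N}$, chosen so that the averages of $\chi_{E_N}$ over large families of pairwise a.e.\ disjoint translates of the $R_k$, at each of $N$ distinct scales, all equal $2^{-N}$, which yields $|\{M_\mathcal{B}\chi_{E_N}\ge 2^{-N}\}|\gtrsim N\,2^{N}|E_N|$. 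The technical heart, which your sketch only flags as ``where the real work lies,'' is precisely that under translation invariance alone the side lengths $a_k,b_k$ are fixed and cannot be rescaled, so the subsequence must be thinned aggressively (in the paper's dyadic model, a gap condition of the form $j_{k-1}>N+j_k$) to make the Cantor structure simultaneously adapted to every $R_k$ and to keep the favourable rectangles at the $N$ different scales a.e.\ disjoint. That calibration is essentially the entire content of the sharpness half; your closing arithmetic $N\log N\lesssim\phi(N/c)$ against $\phi(x)=o(x\log x)$ is fine, but it does nothing until a concrete $E_N$ with these properties is exhibited, and a Perron tree is not that object.
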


This result tells us that there is a certain ``discreteness'' associated to optimal weak type estimates for maximal operators associated to rare bases of rectangles in $\mathbb{R}^2$; the optimal weak type estimate must be of weak type $(1,1)$ or of weak type $(L \log L, L^1)$, but not of the form, say, $(L(\log L)^{1/2}, L^1)\;.$

At the present time there are no satisfactory analogues of Proposition \ref{prop2} for rare bases of rectangular parallelepipeds whose sides are parallel to the coordinate axes in $\mathbb{R}^n$ for $n \geq 3$.  We conjecture, however, the following.

\begin{con}\label{con1}
Let $\mathcal{B}$ be a translation invariant collection of rectangular parallelepipeds in $\mathbb{R}^n$ whose sides are parallel to the coordinate axes.  Then there exists an \emph{integer} $1 \leq k \leq n$ such that $M_\mathcal{B}$ satisfies the weak type estimate

\begin{equation}\label{e0}\left|\left\{x \in \mathbb{R}^n : M_\mathcal{B}f(x) > \alpha\right\}\right| \leq C \int_{\mathbb{R}^n} \frac{|f|}{\alpha}\left(1 + \log^+ \frac{|f|}{\alpha}\right)^{k-1}\end{equation} but such that $M_\mathcal{B}$ satisfies no estimate of the form

\begin{equation}\label{e1}\left|\left\{x \in \mathbb{R}^n : M_\mathcal{B}f(x) > \alpha\right\}\right| \leq C \int_{\mathbb{R}^n} \phi\left(\frac{|f|}{\alpha}\right)\end{equation}
whenever $\phi: [0, \infty) \rightarrow [0, \infty)$ is a convex increasing function satisfying $\phi(x) = o(x (\log x)^{k-1})$.
\end{con}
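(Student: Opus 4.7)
The plan is to attach a combinatorial invariant $k(\mathcal{B}) \in \{1, \ldots, n\}$ to the basis and then prove the upper and lower bounds relative to $k(\mathcal{B})$ separately. Working in logarithmic shape coordinates, for $R \in \mathcal{B}$ with side lengths $l_1, \ldots, l_n$ let $\sigma(R) = (\log(l_2/l_1), \ldots, \log(l_n/l_1)) \in \mathbb{R}^{n-1}$ and set $\Sigma(\mathcal{B}) = \{\sigma(R) : R \in \mathcal{B}\}$. I would then define $k(\mathcal{B}) = 1 + d$, where $d$ is the largest integer for which $\Sigma(\mathcal{B})$ contains shapes varying richly along $d$ independent coordinate directions of $\mathbb{R}^{n-1}$, in a sense made precise by a suitable accumulation/density condition; for $n = 2$ this reduces to the dichotomy of Proposition \ref{prop2}.

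For the upper bound \eqref{e0}, I would run a C\'ordoba-Fefferman style selection argument tailored to the $(k-1)$-parameter effective structure. After rearranging coordinates so that the $k-1$ active scaling directions come first, one combines a one-parameter Hardy-Littlewood bound in each of the $n-k$ inactive directions with an iterated Jessen-Marcinkiewicz-Zygmund argument along the active directions, producing exactly one logarithmic factor per active direction. For $k = n$ this recovers \eqref{e000}; for $k = 1$ the basis is essentially one-scale and one obtains a weak type $(1,1)$ estimate.

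For the sharpness statement, I would construct, for each large $N$, a set $E_N$ whose level set $\{M_\mathcal{B}\chi_{E_N} > 1/2\}$ has measure at least $c|E_N| (\log(1/|E_N|))^{k-1}$, by iterating a Perron-tree type construction once per active scaling direction. Testing the hypothetical estimate \eqref{e1} on $f = \chi_{E_N}$ then forces $\phi(x) \gtrsim x(\log x)^{k-1}$ for large $x$, contradicting the assumption $\phi(x) = o(x(\log x)^{k-1})$.

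The principal obstacle is the classification step itself. In dimensions $n \geq 3$, the set $\Sigma(\mathcal{B})$ can lie on curves or on sets of non-integer Hausdorff dimension, and \emph{a priori} this might produce sharp Orlicz exponents of the form $x(\log x)^\alpha$ with $\alpha \notin \mathbb{Z}$. Ruling this out requires a structural theorem asserting that every translation-invariant axis-parallel basis is, for the purposes of weak-type estimation, equivalent to one whose shape set is either discrete or thick in each of the $n-1$ coordinate directions, with no intermediate regime. The present paper's analysis of the family $\{(s, 2^j s, t) : j \in S\}$ provides supporting evidence: $|S| < \infty$ yields $k = 2$ while $|S| = \infty$ yields $k = 3$, with no intermediate option appearing.
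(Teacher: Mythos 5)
This statement is Conjecture~\ref{con1}, and the paper explicitly presents it as an open conjecture, not a theorem: the paper states that ``At the present time there are no satisfactory analogues of Proposition~\ref{prop2} for rare bases\dots in $\mathbb{R}^n$ for $n\geq 3$,'' and Theorem~\ref{t1} only verifies the conjecture for the particular one-parameter family of Zygmund-type bases with side lengths $s, 2^j s, t$ in $\mathbb{R}^3$. So there is no proof in the paper against which to compare yours; you are attempting to prove something the authors left open.

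Your sketch has the right high-level shape, but it contains a genuine gap that you yourself flag as the ``principal obstacle,'' and it is not a minor technical point---it is the entire content of the conjecture. The quantity $k(\mathcal{B})$ is never actually defined: ``varying richly along $d$ independent coordinate directions, in a sense made precise by a suitable accumulation/density condition'' is a placeholder, not a definition, and without a definition the upper and lower bound arguments cannot even be stated precisely, let alone executed. The discreteness claim---that the sharp Orlicz gauge must be $x(\log x)^{k-1}$ for an integer $k$ and never, say, $x(\log x)^{3/2}$---is exactly what must be proved, and you correctly observe that the shape set $\Sigma(\mathcal{B})$ can sit on fractal or curved subsets of $\mathbb{R}^{n-1}$ in $n\geq 3$. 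Nothing in your proposal rules out such intermediate behavior; you merely note that it needs to be ruled out. The C\'ordoba--Fefferman / Jessen--Marcinkiewicz--Zygmund upper bound and the Perron-tree lower bound are plausible templates for any \emph{fixed} value of $k$, but they presuppose the classification you have not supplied. Also worth noting: the paper's own result (Theorem~\ref{t1}) requires \emph{homothecy} invariance of $\mathcal{B}$, a stronger hypothesis than the translation invariance assumed in Conjecture~\ref{con1}; your sketch does not address what replaces the dilation-invariance trick in the merely translation-invariant setting, where even in the paper's three-dimensional examples the argument is known to be ``quite delicate'' (the \cite{dms2021} case).
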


%We do remark that D'Aniello and Moonens in \cite{dm2017} provide a sufficient condition on a translation invariant basis $\mathcal{B}$ of rectangular parallelepipeds in $\mathbb{R}^n$ so that estimate \ref{e1} is optimal for $k = n$.

A significant contributor to the lack of progress on Conjecture \ref{con1} in dimensions $n=3$ and higher is the relative lack of meaningful classes of rare bases exhibiting known sharp weak type estimates.   In \cite{dm2017}, D'Aniello and Moonens provided a sufficient condition on a basis $\mathcal{B}$ of rectangular parallelepipeds in $\mathbb{R}^n$ so that estimate (\ref{e0}) is optimal for $k = n$.  Using the Fubini theorem and the weak type estimate (\ref{e000}) for the strong maximal operator one can  show that if the basis $\mathcal{B}$ consists of all rectangular parallelepipeds in $\mathbb{R}^3$ with sidelengths of the form $s, s, t$ the associated maximal operator $M_\mathcal{B}$ satisfies estimate (\ref{e0}) for $k=2$ but not estimate (\ref{e1}) for any nonnegative convex increasing function $\phi$ satisfying 
$\phi(x) = o(x (\log x))$.  (See a related discussion of this basis in the seminal paper \cite{zygmund1967} of Zygmund which initiated the topic of rare bases in the subject of differentiation of integrals.) In the more involved argument in \cite{soria}, Soria proved that if $\mathcal{B}$ consists of all rectangular parallelepipeds in $\mathbb{R}^3$ with sidelengths of the form $s, \frac{1}{s}, t$, then $M_\mathcal{B}$ also satisfies estimate (\ref{e0}) for $k=2$ but not estimate (\ref{e1}) for any nonnegative convex increasing function $\phi$ satisfying 
$\phi(x) = o(x (\log x))$. \footnote{ In was in this same paper that Soria disproved the \emph{Zygmund Conjecture}. \cite{fefbeijing} provides a good introduction to the Zygmund Conjecture for the interested reader.   A recent class of counterexamples to the Zygmund Conjecture due to Rey may be found in \cite{rey2020}.  Important contexts where the Zygmund Conjecture does hold are due to A. C\'ordoba \cite{cordoba}; extensions of C\'ordoba's associated covering lemma techniques to higher dimensions due to R. Fefferman and  Pipher may be found in \cite{fp2005}.}   The most recent result to date along these lines is due to Dmitrishin, Hagelstein, and Stokolos, who proved in \cite{dms2021} that if  $\mathcal{B}$ be a collection of rectangular parallelepipeds in $\mathbb{R}^3$ whose sides are parallel to the coordinate axes and such that $\mathcal{B}$ contains parallelepipeds with side lengths of the form $s, \frac{2^N}{s} , t $, where $s, t > 0$ and $N$ lies in an infinite subset of the integers, then the associated geometric maximal operator $M_\mathcal{B}$ satisfies the weak type estimate (\ref{e0}) for $k=3$ but does not satisfy the estimate (\ref{e1}) for any nonnegative convex increasing function $\phi$ satisfying 
$\phi(x) = o(x (\log x)^2)$.   The argument in the latter paper is quite delicate and utilizes the concept of \emph{crystallization}, introduced by Stokolos in \cite{stokolos1988} and developed further in \cite{dms2021, hs2011, stokolos2005,stokolos2006}.  %The case where $\mathcal{B}$ contains parallelepipeds with side lengths of the form $s, \frac{2^N}{s} , t $, where $s, t > 0$ and $N$ lies in an \emph{finite} subset of the integers is essentially covered by Soria in \cite{soria}, where it is shown that $M_\mathcal{B}$ satisfies the weak type estimate \ref{e0} for $k=2$ 
% but does not satisfy the estimate \ref{e1} for any nonnegative convex increasing function $\phi$ satisfying 
%$\phi(x) = o(x (\log^+x)$.

The purpose of this paper is to provide another  class of natural examples of rare bases in $\mathbb{R}^3$ for which Conjecture \ref{con1} holds.   The associated proof also involves crystallization, but in many respects it is more straightforward than the argument in \cite{dms2021} as we are here able to exploit the \emph{homothecy invariance} of the bases considered.  It is our hope that the theorem is not only of intrinsic interest, but the techniques of proof might be used in the future to help resolve Conjecture \ref{con1} in the special but important case of homothecy invariant bases.

Our main result is the following.

\begin{thm}\label{t1}
Let $\mathcal{B}$ be a homothecy invariant collection of rectangular parallelepipeds in $\mathbb{R}^3$ whose sides are parallel to the coordinate axes and with sidelengths of the form $s, 2^j s, t$, where $j$ lies in a nonempty set $S \subset \mathbb{Z}$.  

If $S$ is a finite set, then the associated geometric maximal operator $M_\mathcal{B}$ satisfies the weak type estimate of the form

\begin{equation} \label{e2}\left|\left\{x \in \mathbb{R}^3 : M_{\mathcal{B}}f(x) > \alpha\right\}\right| \leq C \int_{\mathbb{R}^3} \frac{|f|}{\alpha}\left(1 + \log^+ \frac{|f|}{\alpha}\right)\;\end{equation}
but does not satisfy an estimate of the form

\begin{equation}\label{e5}\left|\left\{x \in \mathbb{R}^3 : M_{\mathcal{B}}f(x) > \alpha\right\}\right| \leq C \int_{\mathbb{R}^3} \phi\left(\frac{|f|}{\alpha}\right)\end{equation}
for any convex increasing function $\phi: \mathbb[0, \infty) \rightarrow [0, \infty)$ satisfying the condition
\begin{equation}\label{e6}\lim_{x \rightarrow \infty}\frac{\phi(x)}{x (\log(1 + x))} = 0\;.\end{equation}
\\

If $S$ is an infinite set, then the associated geometric maximal operator $M_\mathcal{B}$ satisfies a weak type estimate of the form

$$\left|\left\{x \in \mathbb{R}^3 : M_{\mathcal{B}}f(x) > \alpha\right\}\right| \leq C \int_{\mathbb{R}^3} \frac{|f|}{\alpha} \left(1 + \log^+ \frac{|f|}{\alpha}\right)^{2}$$

\noindent but does not satisfy an estimate of the form

$$\left|\left\{x \in \mathbb{R}^3 : M_{\mathcal{B}}f(x) > \alpha\right\}\right| \leq C \int_{\mathbb{R}^3} \phi\left(\frac{|f|}{\alpha}\right)$$
for any convex increasing function $\phi: \mathbb[0, \infty) \rightarrow [0, \infty)$ satisfying the condition
$$\lim_{x \rightarrow \infty}\frac{\phi(x)}{x (\log(1 + x))^2} = 0\;.$$
\end{thm}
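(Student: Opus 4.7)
The proof naturally splits into four parts: upper and lower weak-type bounds in each of the two cases. For the upper bound with $S$ finite, the plan is to reduce to the Zygmund basis. For each $j\in S$, the subcollection $\mathcal{B}_j\subset\mathcal{B}$ of rectangles with sidelengths $(s,2^{j}s,t)$ is carried onto the Zygmund basis $\mathcal{Z}$ (sidelengths $(s,s,t)$) by the linear map $T_j(x_1,x_2,x_3)=(x_1,2^{-j}x_2,x_3)$. Tracking the Jacobians, one sees that the weak $L\log L$ estimate for $M_{\mathcal{Z}}$ (which itself follows from Fubini and the $n=2$ case of (\ref{e000})) transfers with the same constant to each $M_{\mathcal{B}_j}$; writing $M_{\mathcal{B}}=\sup_{j\in S}M_{\mathcal{B}_j}$ and applying a union bound over the finite set $S$ then yields (\ref{e2}). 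The upper bound when $S$ is infinite is immediate from $M_{\mathcal{B}}\leq M_{str}$ and the $n=3$ case of (\ref{e000}).

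For the lower bound when $S$ is finite, fix any $j_0\in S$; it suffices to contradict (\ref{e5}) for $M_{\mathcal{B}_{j_0}}$, and by the change of variables above this reduces to the same statement for $M_{\mathcal{Z}}$. Take $f=\chi_Q$ with $Q=[0,1]^3$: for each $x=(x_1,x_2,x_3)$ with $x_1,x_2,x_3\geq 1$, the rectangle $[0,r]\times[0,r]\times[0,x_3]$ with $r=\max(x_1,x_2)$ lies in $\mathcal{Z}$, contains both $x$ and $Q$, and witnesses $M_{\mathcal{Z}}\chi_Q(x)\geq 1/(r^{2}x_3)$. A direct integration yields $|\{M_{\mathcal{Z}}\chi_Q>1/N\}|\gtrsim N\log N$, whereas the right-hand side of (\ref{e5}) at level $\alpha=1/N$ equals $\phi(N)|Q|=\phi(N)$. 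The hypothetical estimate would then force $\phi(N)\gtrsim N\log N$ for every $N$, contradicting (\ref{e6}).

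The substantive content is the lower bound when $S$ is infinite. Given a convex $\phi$ with $\phi(x)/\bigl(x(\log(1+x))^2\bigr)\to 0$ and any constant $C$, I would fix a large $N$ (chosen based on $\phi$ and $C$), select distinct $j_1<j_2<\cdots<j_N$ from $S$, and use a crystallization construction — in the spirit of \cite{dms2021}, but considerably simplified here by homothecy invariance — to build a set $E\subset\mathbb{R}^3$ satisfying $|\{M_{\mathcal{B}}\chi_E>c/N\}|\gtrsim N(\log N)^{2}|E|$. The plan is to assemble $E$ as a two-parameter tree of rectangles from $\mathcal{B}$: one logarithmic factor arises from roughly $\log N$ independent scales available in the $x_3$-direction (the parameter $t$ being freely rescalable by homothecy), the second from the $N$ distinct aspect ratios $2^{j_k}$ available in the $x_1x_2$-plane. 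Comparing this lower bound against $\phi(N)|E|$ then produces the desired contradiction with the growth condition on $\phi$.

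The main obstacle is verifying the measure estimate $|\{M_{\mathcal{B}}\chi_E>c/N\}|\gtrsim N(\log N)^{2}|E|$. Because $\mathcal{B}$ admits only countably many aspect ratios, one must check that at every point of the claimed level set there is a witnessing rectangle actually lying in $\mathcal{B}$ rather than merely in the strong basis; the second logarithmic factor, coming from the discrete parameter $j$, is the delicate one. Homothecy invariance is precisely the property that lets each candidate rectangle be freely rescaled along every axis while keeping its aspect ratio in $\{2^{j_k}\}_{k=1}^{N}$, and this is the simplifying feature that distinguishes the present argument from the more combinatorial one in \cite{dms2021}.
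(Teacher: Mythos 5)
Your treatment of the finite-$S$ case (both directions) and of the infinite-$S$ upper bound is essentially correct and matches the paper in spirit. The paper cites Fava \cite{fava1972} for the finite-$S$ upper bound rather than performing the affine reduction to the Zygmund basis; your change-of-variables argument (noting that $T_j(x_1,x_2,x_3)=(x_1,2^{-j}x_2,x_3)$ carries $\mathcal{B}_j$ onto $\mathcal{Z}$ and that weak-type inequalities are affine-invariant) is a valid alternative, as is your direct computation $|\{M_{\mathcal{Z}}\chi_{[0,1]^3}>1/N\}|\gtrsim N\log N$ for the finite-$S$ lower bound, which the paper only gestures at.

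The genuine gap is the infinite-$S$ lower bound, which you correctly identify as the substantive content of the theorem but then leave as a plan rather than a proof. You state the target estimate and name crystallization and homothecy invariance as the tools, but you do not construct the extremal set or verify the level-set estimate — and that construction is the entire argument. The paper carries it out concretely: it chooses $j_1>j_2>\cdots>j_N$ in $S$ with a sparseness condition $j_{k-1}>N+j_k$, builds the Cantor-type set $C_{j_1,\ldots,j_N}=\{t\in[0,1]:\sum_{k}r_0(2^{j_k}t)=N\}$ from Rademacher functions (of measure $2^{-N}$), sets $E_N=[0,2^{-N+1}]\times C_{j_1,\ldots,j_N}$ and $Z_N=E_N\times[0,1]$, and then shows that averages of $\chi_{E_N}$ over a two-parameter family of translated dilates $R_{k,j,2^l,i}$ of the base rectangles $R_k=[0,2^{-k+1}]\times[0,2^{-j_k}]$ all exceed $2^{-N}$ (for $1\le l\le N-k$). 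Crossing the right halves of these planar rectangles with disjoint intervals $[2^l,2^{l+1}]$ in the $x_3$-direction yields a.e.-disjoint pieces of $\{M_{\mathcal{B}}\chi_{Z_N}\ge 2^{-N}\}$ with total measure $\gtrsim N^2 2^{-N}$; since $|Z_N|=2^{-2N+1}$, taking $\alpha=2^{-N}$ forces $\phi(2^N)\gtrsim N^2 2^N$, the desired contradiction. Your heuristic attribution of the two logarithmic factors — ``$\log N$ independent scales in the $x_3$-direction'' versus ``$N$ aspect ratios'' — does not match this mechanism: the dilation index $l$ ranges over $\sim N$ values, the $N^2$ comes from $\sum_{k=1}^{N}(N-k)$, and the $x_3$-direction serves only to make the various dilation scales disjoint. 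The sparseness condition $j_{k-1}>N+j_k$ is also a nontrivial ingredient you did not anticipate, needed so that the relevant translates of $E_N$ align correctly with the dilated rectangles. In short, the upper bounds and the finite-$S$ lower bound are in good shape, but the heart of the theorem is missing.
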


The remainder of the paper is devoted to a proof of this theorem.  We remark that the statement of Theorem \ref{t1} is very similar to that of Theorem 1 of \cite{dms2021} but there are nonetheless significant differences between these two results.   In particular, the basis $\mathcal{B}$ in the latter paper consists of rectangular parallelepipeds parallel to the coordinate axes with sidelengths of the form $s, \frac{2^N}{s}, t$ where $N$ lies in a nonempty set $S$ of integers.   To the best of our understanding neither result follows from the other, in large part because the latter basis lacks the \emph{dilation invariance} enjoyed by the former as well as the fact that, even for finite sets $S$, the possible ratios of the first two sidelengths of parallelepipeds in the latter basis are all of $(0,\infty)$.
\\

{{\bf{Acknowledgment:} }  We wish to thank the referee for helpful suggestions regarding this paper.}
\section{Proof of Theorem \ref{t1}}
\begin{proof}[Proof of Theorem \ref{t1}]

If $S$ is finite, we may itemize the elements of $S$ as $j_1 , \ldots, j_N$.  Subsequently we may express $M_\mathcal{B}$ as a supremum of maximal functions of the form $M_{\mathcal{B}_k}$, where $\mathcal{B}_k$ consists of all parallelepipeds in $\mathbb{R}^3$ with sidelengths of the form $s, 2^{j_k}s, t$.  From the paper \cite{fava1972} of Fava it readily follows that estimate (\ref{e2}) is satisfied.  As can be seen by testing the maximal operator $M_\mathcal{B}$ on the characteristic functions associated to cubes in $\mathbb{R}^3$, one can readily show that estimate (\ref{e5}) does not hold for any convex increasing function $\phi:[0,\infty) \rightarrow [0, \infty)$ satisfying the limit (\ref{e6}).
\\

We now turn to case that $S$ is an infinite set.   First we note that $M_\mathcal{B}$  satisfies the estimate 
$$\left|\left\{x \in \mathbb{R}^3 : M_{\mathcal{B}}f(x) > \alpha\right\}\right| \leq C \int_{\mathbb{R}^3} \frac{|f|}{\alpha} \left(1 + \log^+ \frac{|f|}{\alpha}\right)^{2}$$ as it is dominated by the strong maximal operator $M_{str}$ acting on measurable functions in $\mathbb{R}^3$. 

 It remains to show that
 $M_\mathcal{B}$ does not satisfy an estimate of the form $$\left|\left\{x \in \mathbb{R}^3 : M_{\mathcal{B}}f(x) > \alpha\right\}\right| \leq C \int_{\mathbb{R}^3} \phi\left(\frac{|f|}{\alpha}\right)$$
for any convex increasing function $\phi: \mathbb[0, \infty) \rightarrow [0, \infty)$ satisfying the condition
$$\lim_{x \rightarrow \infty}\frac{\phi(x)}{x (\log(1 + x))^2} = 0\;.$$  This is the primary difficulty we face. Note that we cannot show that $M_\mathcal{B}$ fails to satisfy such an estimate simply by testing $M_{\mathcal{B}}$ on the characteristic function of a  cube.   Instead we need to utilize ideas involving \emph{crystallization} introduced by Stokolos in \cite{stokolos1988} and further developed by Dmitrishin, Hagelstein and Stokolos in \cite{dms2021, hs2011, stokolos2005, stokolos2006}.
\\

Let $N > 1$ be a positive integer.  Since $S$ is an infinite set contained in the integers, we may assume without loss of generality that there exist natural numbers \mbox{$j_1 > j_2 > \cdots > j_N$} so that any parallelepiped in $\mathbb{R}^3$ whose sides are parallel to the coordinate axes with sidelengths of the form $ 2^{-k + 1}, 2^{-j_k}, t$ lie in $\mathcal{B}$.  
%(This is permitted as both $ \left|\left\{x \in \mathbb{R}^3 : M_{\mathcal{B}}f(x) > \alpha\right\}\right|$ and $\int_{\mathbb{R}^3} \phi\left(\frac{|f|}{\alpha}\right)$ scale with dilations in the $y$-axis direction in the same way.) 
 Moreover for technical reasons we will see later we also assume without loss of generality that $j_k > N + j_{k-1}$.  Let 
\begin{align}
R_1 &= [0,1] \times [0, 2^{-j_1}],\notag
\\R_2 &= [0, \frac{1}{2}] \times [0, 2^{-j_2}], \notag
\\ &\vdots  \notag 
\\R_N &= [0, 2^{-N+1}] \times [0, 2^{-j_N}]\;.\notag
\end{align}
% $$R_1 = [0,1] \times [0, 2^{-j_1}]\;,\; R_2 = [0, \frac{1}{2}] \times [0, 2^{-j_2}]\;,\; \ldots,\; R_k = [0, 2^{-k+1}] \times [0, 2^{-j_k}]\;.$$

 We define the Rademacher function $r_0(t)$ by
 $$r_0(t) = \chi_{[0, \frac{1}{2}]}(t) - \chi_{(\frac{1}{2},1)}(t)\;,$$ where we extend $r_0(t)$ to be periodic by $r_0(t+1) = r_0(t)\;.$   Let $C_{j_1, \ldots, j_N}$ be defined by
 $$C_{j_1, \ldots, j_N} = \left\{t \in [0,1] : \sum_{k=1}^N r_0(2^{j_k}t) = N\right\}\;.$$
 Observe that $m_1(C_{j_1, \ldots, j_N}) = 2^{-N}$, where we let $m_k(A)$ denote the $k$-dimensional Lebesgue measure of a set $A \subset \mathbb{R}^k$. (That this holds can   be seen by noting that the sets $\{t \in [0,1]: r_0(2^{j_k}t) = 1\}$ correspond to $N$ mutually independent events all of probability $\frac{1}{2}$.)  Define now the set $E_{j_1, \ldots, j_N}$ (which we will abbreviate as $E_N$) in $\mathbb{R}^2$ by
 $$E_{j_1, \ldots, j_N} = [0, 2^{-N+1}] \times C_{j_1, \ldots, j_N}\;.$$ 
\\
\begin{figure}[ht]
\centering
\def\svgwidth{300pt}
%% Creator: Inkscape 1.1 (c68e22c387, 2021-05-23), www.inkscape.org
%% PDF/EPS/PS + LaTeX output extension by Johan Engelen, 2010
%% Accompanies image file '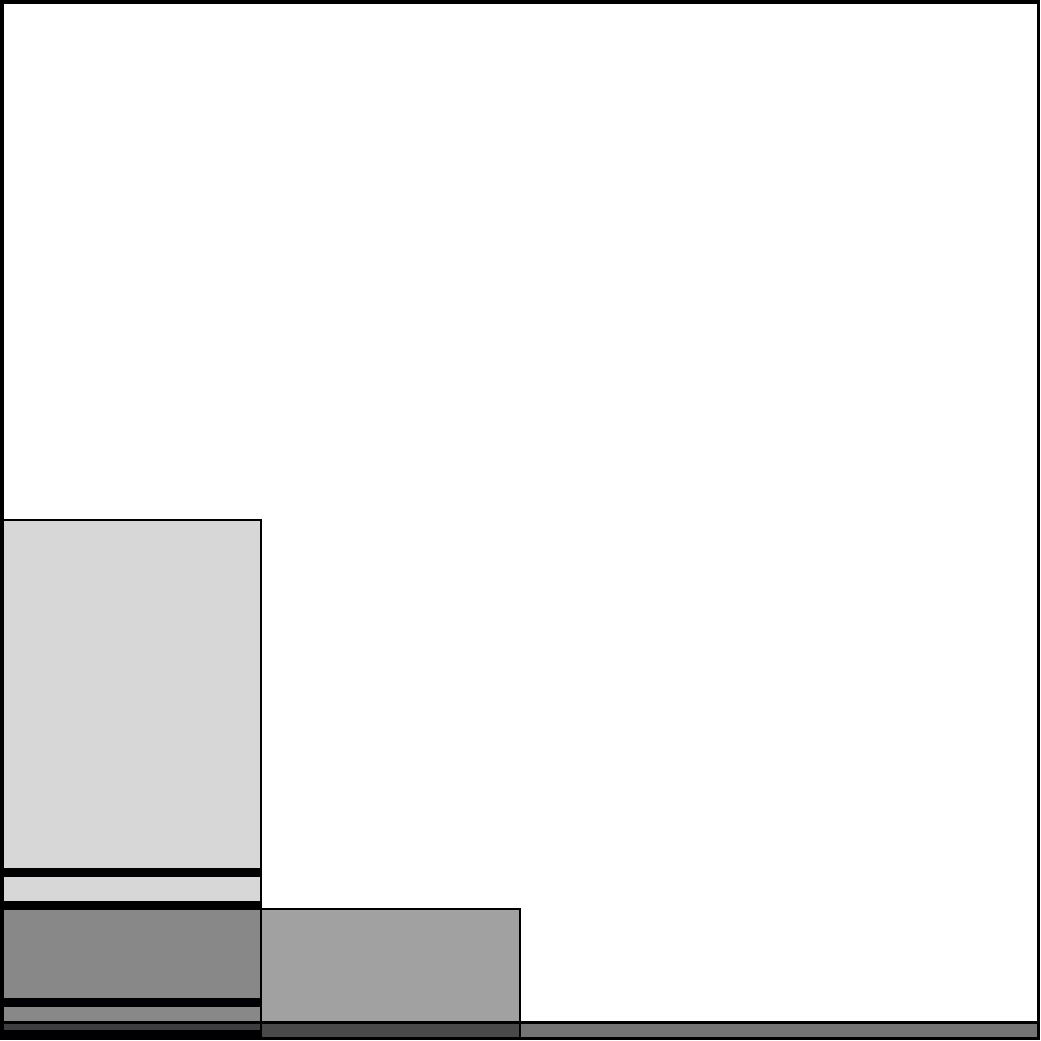' (pdf, eps, ps)
%%
%% To include the image in your LaTeX document, write
%%   \input{<filename>.pdf_tex}
%%  instead of
%%   \includegraphics{<filename>.pdf}
%% To scale the image, write
%%   \def\svgwidth{<desired width>}
%%   \input{<filename>.pdf_tex}
%%  instead of
%%   \includegraphics[width=<desired width>]{<filename>.pdf}
%%
%% Images with a different path to the parent latex file can
%% be accessed with the `import' package (which may need to be
%% installed) using
%%   \usepackage{import}
%% in the preamble, and then including the image with
%%   \import{<path to file>}{<filename>.pdf_tex}
%% Alternatively, one can specify
%%   \graphicspath{{<path to file>/}}
%% 
%% For more information, please see info/svg-inkscape on CTAN:
%%   http://tug.ctan.org/tex-archive/info/svg-inkscape
%%
\begingroup%
  \makeatletter%
  \providecommand\color[2][]{%
    \errmessage{(Inkscape) Color is used for the text in Inkscape, but the package 'color.sty' is not loaded}%
    \renewcommand\color[2][]{}%
  }%
  \providecommand\transparent[1]{%
    \errmessage{(Inkscape) Transparency is used (non-zero) for the text in Inkscape, but the package 'transparent.sty' is not loaded}%
    \renewcommand\transparent[1]{}%
  }%
  \providecommand\rotatebox[2]{#2}%
  \newcommand*\fsize{\dimexpr\f@size pt\relax}%
  \newcommand*\lineheight[1]{\fontsize{\fsize}{#1\fsize}\selectfont}%
  \ifx\svgwidth\undefined%
    \setlength{\unitlength}{299.61127461bp}%
    \ifx\svgscale\undefined%
      \relax%
    \else%
      \setlength{\unitlength}{\unitlength * \real{\svgscale}}%
    \fi%
  \else%
    \setlength{\unitlength}{\svgwidth}%
  \fi%
  \global\let\svgwidth\undefined%
  \global\let\svgscale\undefined%
  \makeatother%
  \begin{picture}(1,1)%
    \lineheight{1}%
    \setlength\tabcolsep{0pt}%
    \put(0,0){\includegraphics[width=\unitlength,page=1]{p45figddesktop.pdf}}%
    \put(0.28200542,0.40657373){\color[rgb]{0,0,0}\makebox(0,0)[lt]{\lineheight{1.25}\smash{\begin{tabular}[t]{l}$R_3$\end{tabular}}}}%
    \put(0.53114208,0.14186605){\color[rgb]{0,0,0}\makebox(0,0)[lt]{\lineheight{1.25}\smash{\begin{tabular}[t]{l}$R_2$\end{tabular}}}}%
    \put(0.20415019,0.73356564){\color[rgb]{0,0,0}\makebox(0,0)[lt]{\lineheight{1.25}\smash{\begin{tabular}[t]{l}$E_{5,3,1}$\end{tabular}}}}%
    \put(0.82699191,0.04843978){\color[rgb]{0,0,0}\makebox(0,0)[lt]{\lineheight{1.25}\smash{\begin{tabular}[t]{l}$R_1$\end{tabular}}}}%
    \put(0,0){\includegraphics[width=\unitlength,page=2]{p45figddesktop.pdf}}%
  \end{picture}%
\endgroup%

\caption{The set $E_{5,3,1}$ together with rectangles $R_1$, $R_2$, and $R_3$}
\label{aa3}
\end{figure}
%%%%%%%%%%%%%%%%%%%%%%%%%%%%%%% FIGURE FIGURE FIGURE

Figure 1 provides an illustration of the case that $j_1 = 5$, $j_2 = 3$, and $j_3 = 1$, indicating the set $E_{5,3,1}$ together with associated rectangles $R_1$, $R_2$, and $R_3$.
\\

 We now assume without loss of generality that $j_{k-1} > N + j_{k}$. 
 \\
 
  Note  that
 
 $$\frac{1}{m_2(R_k)}\int_{R_k}\chi_{E_N} = 2^{-N}$$ for each $k$.
  For each $k$, we let $\{R_{k,j}\}_j$ be the collection of all the vertical translates $ R^\prime$ in $\mathbb{R}^2$ of the rectangle $R_k$ that are dyadic rectangles and such that
 
$$\frac{1}{m_2(R^\prime)}\int_{R^\prime} \chi_{E_N} = 2^{-N}\;.$$ 
Note that the number of such translates is  
\begin{align}
\#\{R_{k,j}\} &= \frac{m_2(E_N)}{m_2(E_N \cap R_k)} \notag
\\&= \frac{2^{-N+1}\cdot 2^{-N}}{2^{-N+1}\cdot\left(2^{-k}\cdot 2^{-j_k}\right)}\notag
\\&= 2^{-N +k + j_k}\;\notag
\end{align}
 and moreover these translates, for fixed $k$, are a.e. pairwise disjoint.
 \\
 
 It will now be convenient to have a notation indicating homothecies of a rectangle $R$ that share the lower left corner of $R$.  This is done as follows.  Given a rectangle $R = [a, a + \Delta_1]\times[b, b + \Delta_2]\subset \mathbb{R}^2$ and $\delta > 0$, 
 we let
 $$\delta R = [a, a + \delta \Delta_1]\times[b, b + \delta\Delta_2]\;.$$
 
 Given $s \in \mathbb{R}$, let $\tau_s R$ denote the vertical translate of the rectangle $R \subset \mathbb{R}^2$ given by
 $$\chi_{\tau_s R}(x_1, x_2) = \chi_R(x_1, x_2 - s)\;.$$
 Here $\chi_A$ denotes the usual indicator function of set $A$.
 Given $k$, $1 \leq j \leq 2^{-N + k + j_k}$, a nonnegative integer $l$, and $1 \leq i \leq 2^l$, we define  $R_{k, j, 2^l, i}$ (a translate of a dilate of $R_k$) by
 
 $$R_{k, j, 2^l, i} = \tau_{(i-1)\cdot 2^{-l}\cdot 2^{-j_k}}(2^{-l}R_{k,j})\;.$$
(Note that we can recognize that $-N + k + j_k$ is nonnegative since the conditions \mbox{$j_1 > j_2 > \cdots > j_N$} together with the fact that each $j_i$ is a natural number guarantee that $j_k \geq N - k + 1$.)
 One can compute that
 
 $$\frac{1}{m_2(R_{k, j, 2^0, 1})}\int_{R_{k, j, 2^0, 1}} \chi_{E_N} = 2^{-N}\;,$$
 $$\frac{1}{m_2(R_{k, j, 2, 1})}\int_{R_{k, j, 2^1, 1}} \chi_{E_N} = 2^{-N+2}\;,$$
 and more generally
 $$\frac{1}{m_2(R_{k,j,2^l, i})}\int_{R_{k,j,2^l, i}}\chi_{E_N} = 2^{-N + l + 1}$$
 for $i = 1, \ldots, 2^{l-1}$, \emph{provided} $1 \leq l$  and $2^{-N + 1} \leq 2^{-l}\cdot 2^{-k+1}$, i.e., provided $1\leq l \leq N-k$.  It is for these estimates that we require the sparseness condition $j_{k-1} > N + j_{k}$, as these enable us to have that, for $1 \leq l \leq N-k$ and fixed $j,k$,  the intersections of $E_N$ with the left hand sides of the  $R_{k,j,2^l, i}$ are vertical translates of one another for $1 \leq i \leq 2^{l-1}$.
 \\
 
 It is important here to recognize that each $R_{k, j, 2^l, i}$ is indeed a translate of a dilate of  $R_{k}$, so rectangular parallelepipeds of the form $R_{k,j,2^l, i}\times [0,t]$ lie in the basis $\mathcal{B}$ since $\mathcal{B}$ is \emph{homothecy invariant.}   This is a point where the argument we provide here differs significantly from the crystallization argument provided in the related paper \cite{dms2021}.  
 \\

 Define now the set $Z_{j_1, \ldots, j_N}$ (which we will refer to simply as $Z_N$) in $\mathbb{R}^3$ by
 $$Z_{j_1, \ldots, j_N} = E_{j_1, \ldots, j_N} \times [0,1]\;.$$
 
 Given a rectangle $R \subset \mathbb{R}^2$ whose sides are parallel to the coordinate axes, we let $rh(R)$ denote the right half of $R$.
 
 The above averages over the $R_{j,k,2^l, i}$ yield the inclusion

$$ \left\{x \in \mathbb{R}^3 : M_\mathcal{B}\chi_{Z_N}(x) \geq 2^{-N} \right\} \supset \bigcup_{j,k,2^l,i \atop 1 \leq l \leq N - k}^\cdot (rh(R_{k,j,2^l,i})) \times [2^{l}, 2^{l+1}]\;,$$
 where the sets in the union are a.e. pairwise disjoint.
 This yields that
 
 \begin{align}
 \left| \left\{x \in \mathbb{R}^3 : M_\mathcal{B}\chi_{Z_N}(x) \geq 2^{-N} \right\} \right| \notag &\geq \sum_{k = 1}^N \sum_{j = 1}^{2^{-N + k + j_k}}\sum_{l=1}^{N-k}\sum_{i = 1}^{ 2^{l-1}} \left|(rh(R_{k,j,2^l,i})) \times [2^{l}, 2^{l+1}]\right|\; 
 \\&= \sum_{k = 1}^N \sum_{j = 1}^{2^{-N + k + j_k}}\sum_{l=1}^{N-k}\sum_{i = 1}^{ 2^{l-1}}\frac{1}{2}2^{-2l}|R_k|\cdot 2^l \notag
%  \\&= \sum_{k = 1}^N \sum_{j = 1}^{2^{-N + k + j_k}}\sum_{l=1}^{N-k}\sum_{i = 1}^{ 2^{l-1}}2^{-l-1} 2^{-k+1 - j_k}\; \notag
%   \\&= \sum_{k = 1}^N \sum_{j = 1}^{2^{-N + k + j_k}}\sum_{l=1}^{N-k}2^{-k-j_k-1}\notag
  \\&= \sum_{k = 1}^N \sum_{j = 1}^{2^{-N + k + j_k}}(N-k)\cdot2^{-k-j_k-1}\notag
   \\&= \sum_{k = 1}^N  (N-k)\cdot 2^{-N + k + j_k}\cdot 2^{-k-j_k-1}\notag
   \\&\gtrsim \sum_{k=1}^N (N-k)\cdot 2^{-N}\notag
   \\&\gtrsim N^2 2^{-N}\;. \notag
     \end{align}
 
 As $|Z_N| = 2^{-2N + 1}$, we see $M_\mathcal{B}$ does not satisfy an estimate of the form 
 
 \begin{equation}\label{e21}\left|\left\{x \in \mathbb{R}^3 : M_{\mathcal{B}}f(x) > \alpha\right\}\right| \leq C \int_{\mathbb{R}^3} \phi\left(\frac{|f|}{\alpha}\right)
 \end{equation}
for any convex increasing function $\phi: \mathbb[0, \infty) \rightarrow [0, \infty)$ satisfying the condition
$$\lim_{x \rightarrow \infty}\frac{\phi(x)}{x (\log(1 + x))^2} = 0\;.$$ 
To see this, suppose $\phi$ satisfies (\ref{e21}).    Setting $f = \chi_{Z_N}$, $\alpha = 2^{-N}$ then yields that $$N^2 2^N \leq C \phi(2^N)\;,$$ providing the desired result.

 %\begin{align}
 %\left|\left\{x \in \mathbb{R}^3 : M_\mathcal{B}\chi_{Z_N} \geq 2^{-N} \right\}\right| \geq \sum_{j,k}\sum_{1 \leq l \leq N-j-1} \sum_i \frac{1}{2}|R_{j,k}|\cdot2^{-2l}

\end{proof}
\begin{bibsection}
\begin{biblist}

\bib{favacapri}{article}{
author = {O. N. Capri},
author = {N. A. Fava},
journal = {Studia Math.},
volume = {78},
year = {1984},
title = {Strong differentiability with respect to product measures},
pages = {173--178},
review ={\MR{0766713}},
}

\bib{cordoba}{article}{
author = {A. C\'ordoba},
journal = {Harmonic analysis in Euclidean spaces (Proc. Sympos. Pure Math., Williams Coll., Williamstown, Mass., 1978) Part 1},
venue = {Williams Coll., Williamstown, Mass.}
volume = {35},
year = {1979},
title = {Maximal functions, covering lemmas and Fourier multipliers},
pages = {29--50},
review ={\MR{0545237}},
}

\bib{cf1975}{article}{
author = {A. C\'ordoba},
author = {R. Fefferman},
journal = {Ann. of Math.},
volume = {102},
year = {1975},
title = {A geometric proof of the strong maximal theorem},
pages = {95--100},
review={\MR{0379785}},
}

\bib{dm2017}{article}{
author = {E. D'Aniello},
author = {L. Moonens},
journal = {Ann. Acad. Sci. Fenn. Math.},
volume = {42},
year = {2017},
pages = {119--133},
title = {Averaging on $n$-dimensional rectangles},
review = {\MR{3558519}},
}

\bib{dms2021}{article}{
author = {D. Dmitrishin},
author = {P. Hagelstein},
author = {A. Stokolos},
title = {Sharp weak type estimates for a family of Soria bases},
journal = {submitted for publication},
eprint = {2101.08736},
}

\bib{fava1972}{article}{
author = {N. Fava},
journal = {Studia Math.},
volume = {42},
year = {1972},
title = {Weak type inequalities for product operators},
pages = {271--288},
review = {\MR{308364}},
}

\bib{fefbeijing}{article}{
author={R. Fefferman},
title={Multiparameter Fourier analysis},
journal={Beijing lectures in harmonic analysis (Beijing, 1984), Ann. of Math. Stud.}
volume={112},
pages={47--130}, 
publisher={Princeton Univ. Press},
review={\MR{0864655}},
}

\bib{fp2005}{article}{
author = {R. Fefferman},
author = {J. Pipher},
title = {A covering lemma for rectangles in $\mathbb{R}^n$},
journal = {Proc. Amer. Math. Soc.},
year = {2005},
volume={133},
pages = {3235--3241},
review = {\MR{2161145}},
}

\bib{guzman1974}{article}{
author = {M. de Guzm\'an},
journal = {Studia Math.},
volume = {49},
year = {1974},
pages = {188--194},
title = {An inequality for the Hardy-Littlewood maximal operator with respect to a product of differentiation bases},
review = {\MR{0333093}},
}

\bib{guzman}{book}{
author = {M. de Guzm\'an},
title = {Differentiation of integrals in $\mathbb{R}^n$},
series = {Lecture Notes in Mathematics},
volume = {481},
publisher = {Springer-Verlag},
year = {1975},
review = {\MR{0457661}},
}

\bib{hs2011}{article}{
author = {P. Hagelstein},
author = {A. Stokolos},
journal = {New York J. Math.},
volume = {17},
year = {2011},
title = {Weak type inequalities for maximal operators associated to double ergodic sums},
pages = {233--250},
review = {\MR{2781915}},
}

%\bib{jmz}{article}{
%author = {B. Jessen},
%author = {J. Marcinkiewicz},
%author = {A. Zygmund},
%journal = {Fund. Math.},
%volume = {25},
%title = {A note on differentiability of multiple integrals},
%year = {1935},
%pages = {217--234},
%}

\bib{rey2020}{article}{
author={G. Rey},
title={Another counterexample to Zygmund's conjecture},
journal={Proc. Amer. Math. Soc.},
volume ={148},
year={2020},
pages={5269--5275},
review={\MR{4163839}},
}

\bib{soria}{article}{
author = {Soria, F.},
journal = {Ann. of Math.},
volume = {123},
title = {Examples and counterexamples to a conjecture in the theory of differentiation of integrals},
year = {1986},
pages = {1--9},
 review={\MR{0825837}},
}

%\bib{si}{book}{
%    author={E. M. Stein},
%   title={Singular integrals and differentiability properties of functions},
%   publisher={Princeton University Press},
%   date={1970},
%   review={\MR{0290095}},
%}

\bib{stokolos1988}{article}{
author = {A. M. Stokolos},
journal = {Studia Math.},
volume = {88},
title = {On the differentiation of integrals of functions from $L \phi(L)$},
year = {1988}, 
pages = {103--120},
review = {\MR{931036}},
}

\bib{stokolos2005}{article}{
author = {A. M. Stokolos},
journal = {Ann. Inst. Fourier (Grenoble)},
title = {Zygmund's program: some partial solutions},
volume = {55},
year = {2005},
pages = {1439--1453}, 
review = {\MR{2172270}},
}

\bib{stokolos2006}{article}{
author = {Stokolos, A. M.},
journal = {Colloq. Math.}
title = {On weak type inequalities for rare maximal functions in $\mathbb{R}^n$},
volume = {104},
year = {2006},
pages = {311--315},
review = {\MR{2197080}},
}

\bib{zygmund1967}{article}{
author = {Zygmund, A.},
journal = {Colloq. Math.},
volume = {16},
year = {1967},
title = {A note on the differentiability of integrals},
pages = {199--204},
review = {\MR{0210847}},
}

\end{biblist}
\end{bibsection}

\end{document}